\documentclass [11pt,reqno]{amsart}
\usepackage {amsmath,amssymb,epsfig,enumerate,verbatim,geometry}
\newif\ifpdf
\ifpdf
  \usepackage[pdftex]{graphicx}
  \usepackage[pdftex]{hyperref}
\else
  \usepackage{graphicx}
\fi

\geometry{centering,vcentering,marginratio=1:1,vscale=0.75,hscale=0.71}


\newcommand{\C}{\mathbf{C}}

\renewcommand{\P}{\mathbf{P}}

\newcommand{\fm}{\mathfrak{m}}

\newcommand{\cA}{\mathcal{A}}

\newcommand{\cD}{\mathcal{D}}

\newcommand{\cO}{\mathcal{O}}

\newcommand{\bvP}{{\check{\P}}}
\newcommand{\vB}{{\check{B}}}
\newcommand{\vC}{{\check{C}}}

\newcommand{\vdelta}{{\check{\delta}}}
\newcommand{\vpsi}{{\check{\psi}}}

\newcommand{\eg}{{\rm e.g.\ }} 
\newcommand{\ie}{{\rm i.e.\ }}

\renewcommand{\=}{:=}

\DeclareMathOperator{\reg}{reg}
\DeclareMathOperator{\sing}{sing}

\numberwithin{equation}{section}       

\newtheorem{prop} {Proposition} [section]

\newtheorem{lem}[prop] {Lemma}

\newtheorem{prop-def}[prop]{Proposition-Definition}

\newtheorem*{thmA}{Theorem A} 
\newtheorem*{thmB}{Theorem B} 

\theoremstyle{remark}
\newtheorem{rmk}[prop]{Remark}

\newtheorem*{ackn}{Acknowledgment} 

\title{Algebraic webs invariant under endomorphisms}
\date{\today}

\author{Marius Dabija
  \and
  Mattias Jonsson}
\address{Dept of Mathematics\\
  University of Michigan\\
  Ann Arbor, MI 48109-1109\\
  USA}

\email{mattiasj@umich.edu}
\thanks{Marius Dabija passed away on June 22, 2003.} 

\subjclass[2000]{Primary: 32H50, Secondary: 14C21}

\begin{document}

\begin{abstract}
  We classify noninvertible, holomorphic selfmaps of the projective 
  plane that preserve an algebraic web. In doing so, we obtain
  interesting examples of critically finite maps.
\end{abstract}

\maketitle

%
%
%
%
\section*{Introduction}
In this paper we classify holomorphic
selfmaps of the complex projective plane 
$\P^2$ that are integrable in the quite 
specific sense that they preserve an algebraic web. 

Recall that an \emph{algebraic web} is 
given by a reduced curve $C\subset\bvP^2$, where 
$\bvP^2$ is the dual projective plane consisting of 
lines in $\P^2$. We say that the web is irreducible
if $C$ is an irreducible curve. 
The web is \emph{invariant} for
a holomorphic mapping $f:\P^2\to\P^2$ if every line
in $\P^2$ belonging to $C$ is mapped to another such line.
See Sections~\ref{sec:webs} and~\ref{sec:invariance} for details.
We will assume $f$ is noninvertible.
\begin{thmA}
  If $C$ is irreducible, it is of one of the following types:
  \begin{itemize}
  \item[(i)]
    a line;
  \item[(ii)]
    a smooth conic;
  \item[(iii)]
    a smooth cubic; 
  \item[(iv)]
    a nodal cubic.
  \end{itemize}
\end{thmA}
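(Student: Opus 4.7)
The plan is to exploit the self-map $\check f : C \to C$ induced by $f$, defined by sending $\ell \in C$ to the point of $\bvP^2$ corresponding to the line $f(L_\ell) \subset \P^2$. Web invariance ensures this is well-defined, and $\check f$ is a morphism since it arises from restricting to $C$ the natural action of $f$ on the incidence variety $\{(p, L) \in \P^2 \times \bvP^2 : p \in L\}$. Let $\nu : \tilde C \to C$ be the normalization and $\tilde f : \tilde C \to \tilde C$ the lift of $\check f$, of some degree $e \geq 1$. I then bound first the geometric genus $g := g(\tilde C)$, then $\deg C$, and finally classify the singularities.

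To bound $g \leq 1$: if $e \geq 2$, Riemann--Hurwitz applied to $\tilde f$ gives $(e-1)(2g-2) \leq 0$, so $g \leq 1$. If $e = 1$, then $\tilde f$ is an automorphism of $\tilde C$; if additionally $g \geq 2$, then $\operatorname{Aut}(\tilde C)$ is finite by Hurwitz' theorem, so some iterate $\tilde f^k = \id$, which means $f^k$ fixes every line of $C$ setwise. When $\deg C \geq 2$, a generic point of $\P^2$ lies on at least two distinct lines of $C$ and is therefore a fixed point of $f^k$; Zariski density then forces $f^k = \id$, contradicting $(\deg f)^k \geq 2$. When $\deg C = 1$, $\tilde C \cong \P^1$ already has $g = 0$. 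Hence $g \leq 1$ in every case.

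The main difficulty is bounding $\deg C \leq 3$. For $n := \deg C \geq 4$, the plane genus formula forces a total $\delta$-invariant of $C$ of at least $(n-1)(n-2)/2 - 1$, and the singular set $\operatorname{Sing}(C)$ is a finite $\check f$-invariant subset. I plan to rule out $n \geq 4$ by combining this with the numerical constraint $e \leq \deg f$ (each $L \in C$ has at most $\deg f$ preimages under $\check f$, all lying in the degree-$\deg f$ plane curve $f^{-1}(L)$) and with the geometry of the envelope of $C$, an $f$-invariant plane curve in $\P^2$ whose class is determined by the singularities of $C$.

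Once $\deg C \leq 3$ and $g \leq 1$ are established, enumeration leaves cases (i)--(iv) together with the cuspidal cubic, which must be excluded. Its unique cusp $P_0$ is $\check f$-fixed, and the single tangent direction at $P_0$ dually singles out a distinguished fixed point $p_0 \in L_{P_0} \subset \P^2$ fixed by $f$. The one-dimensional tangent cone at the cusp forces rigid local dynamics of $f$ at $p_0$ which I expect can be shown to contradict $\deg f \geq 2$ given that $C$ is a plane cubic. This final exclusion is a secondary obstacle.
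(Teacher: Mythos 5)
There is a genuine gap: everything after your genus bound is a statement of intent, not a proof, and what is missing is precisely the core of the theorem. Your first step is fine (and can be simplified: the induced map on $C$ has topological degree exactly $d\ge 2$, since $f_*$ multiplies degrees of divisors by $d$ and hence $g^*\cO_C(1)=d\,\cO_C(1)$; so the $e=1$ automorphism discussion is unnecessary). But the bound $\deg C\le 3$ is exactly ``the main difficulty'' you name, and your plan for it --- genus formula plus ``$\operatorname{Sing}(C)$ is a finite $\check f$-invariant subset'' plus $e\le\deg f$ plus unspecified geometry of the envelope --- does not come close to an argument. The paper's proof of this step needs several nontrivial inputs that you neither state nor prove: (a) $\sing C$ is \emph{totally} invariant (backward invariant) under the induced map, and moreover every singular point has multiplicity $\delta-1$ on $C$ (Lemma~\ref{l:singtotinv}, whose proof is a genuine argument about the rank of $Df$ along lines of the web); (b) consequently $\psi^{-1}(\sing C)$ is a finite totally invariant set for $\phi:A\to A$, which rules out $A$ elliptic and has at most two points when $A$ is rational, each corresponding to a totally invariant line contributing $d-1$ to the fiber part $R_f^C$; (c) the identification of multiplicities in $R_\phi$ with multiplicities of web lines in $R_f^C$ (Lemma~\ref{l:fibercritset}); (d) the inequality $\deg\vC\le\frac{2}{d-1}\deg R_f^\sigma$ (Lemma~\ref{l:sectcritset}); and (e) the Pl\"ucker-type formula~\eqref{e:eulchar} to convert all this into $\delta\le 3$. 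Asserting that $\sing C$ is ``invariant'' (forward invariance is neither what is needed nor what is easy to prove) and that the envelope is $f$-invariant does not substitute for this chain.

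Your treatment of the cuspidal cubic is likewise only a hope (``which I expect can be shown to contradict $\deg f\ge2$''), and the proposed route via rigid local dynamics at the dual point of the cusp is not carried out; there is no reason given why a degree-$d$ map could not have the requisite local behavior. In the paper no separate local-dynamics argument is needed: a cuspidal cubic has a unibranch totally invariant singular point, so $\#A_s=1$, the ramification bookkeeping forces $\deg R_f^\sigma=d-1$, and Lemma~\ref{l:sectcritset} then gives $\deg\vC\le 2$, contradicting the fact that the dual of a singular curve of degree $\ge3$ has degree $\ge3$; the cusp case is thus absorbed into the same numerical case analysis that proves $\delta\le3$. In short, your proposal correctly identifies the two hard points of the theorem but proves neither, so as it stands it establishes only that the normalization of $C$ is rational or elliptic.
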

The maps in~(iii) and~(iv) are always critically finite.
\begin{thmB}
  If $C$ is reducible, it is of one of the following types:
  \begin{itemize}
  \item[(i)]
    the union of two lines;
  \item[(ii)]
    the union of three lines in general position;
  \item[(iii)]
    the union of a conic and a line in general position.
  \end{itemize}
\end{thmB}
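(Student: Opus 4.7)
The plan is to reduce Theorem B to Theorem A by decomposing $C$ into its irreducible components. Since $f$ is continuous and preserves the web, it permutes the finitely many irreducible components $C_1,\dots,C_k$ of $C$; after replacing $f$ by an iterate $f^N$, I may assume each $C_i$ is individually $f$-invariant. Theorem A then forces each $C_i$ to be a line, a smooth conic, a smooth cubic, or a nodal cubic, so the classification reduces to deciding which combinations of these four types can coexist.

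The second step is to eliminate cubic components whenever the web is reducible. The maps fixing a smooth or nodal cubic (cases (iii) and (iv) of Theorem A) are critically finite and essentially arise from complex multiplication on a smooth or nodal cubic, so their dynamics is rigid enough that no further irreducible algebraic curve in $\bvP^2$ can be $f$-invariant. I would prove this by analysing the critical orbit of $f$ and showing that an additional invariant component (of one of the types allowed by Theorem A) would produce preperiodic algebraic structure incompatible with the Latt\`es-type dynamics imposed by the cubic.

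The third and main step is to classify the admissible combinations built from lines and smooth conics. Line components of $C$ correspond to pencils of lines through points $p_1,\dots,p_r\in\P^2$ that are fixed by $f$, and smooth conic components correspond to families of tangents to smooth conics $Q_1,\dots,Q_s\subset\P^2$ also fixed by $f$. I would then argue successively: that $s\le 1$, since two invariant smooth conics would over-determine $f$ (maps preserving a smooth conic come from a degree-two quotient $\P^1\times\P^1\to\P^2$, and two independent such structures are incompatible with $\deg f\ge 2$); that if $s=1$ then $r\le 1$ and the unique line must be in general position with the conic, by examining how the invariant conic and pencil meet in $\bvP^2$ together with the action of $f$ on the points of tangency; and that if $s=0$ then $r\le 3$, by showing that many invariant pencils force $f$ to be essentially monomial in coordinates adapted to the pencils, so that at most three pencils are preserved and they must be in general position (no three base points collinear).

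The main obstacle is this last case analysis, in particular ruling out the ``near-miss'' configurations: two conics, a conic with two or more lines, and four or more lines. Each of these requires pinning down the algebraic form of $f$ under the assumed invariance and deriving a contradiction either with $\deg f\ge 2$ or with the list in Theorem A, and this is where the bulk of the work will lie.
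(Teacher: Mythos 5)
Your skeleton starts the same way the paper does: pass to an iterate of $f$ so that each irreducible component $C_j$ of $C$ defines an invariant web, and then each $C_j$ is one of the four types of Theorem~A. But from that point on your proposal is a list of goals rather than a proof: the elimination of cubic components (``rigid Latt\`es-type dynamics would be incompatible with a further invariant component''), the bound $s\le1$ on conics (``two conics would over-determine $f$''), the step ``if $s=1$ then $r\le1$ and the intersection is transverse,'' and the bound $r\le3$ with general position (``many invariant pencils force $f$ to be essentially monomial'') are all asserted with a strategy sketch but no argument, and you yourself flag that this is where the bulk of the work lies. None of these claims is routine as stated; for instance, in the smooth-cubic case there \emph{are} additional invariant curves in $\P^2$ (the dual sextic), so an argument purely about ``no extra invariant algebraic structure'' has to be formulated much more carefully in terms of invariant webs, and it is not clear your critical-orbit approach closes that case.

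The paper's actual mechanism is more concrete and much shorter, and it is worth comparing. For each component one splits the ramification divisor as $R_f=R_f^{C_j}+R_f^{\sigma_j}$. If some $\delta_1=\deg C_1>1$, then $f(|R_f^{\sigma_1}|)=\vC_1$; since lines of any other invariant web map to lines, no component of $R_f^{\sigma_1}$ can lie in another web, which forces $\delta_i=1$ for $i>1$ and $R_f^{\sigma_i}=R_f^{\sigma_1}$, hence $R_f^{C_i}=R_f^{C_1}$. Comparing the degrees computed in Section~\ref{sec:examples} ($\deg R_f^C$ equals $2(d-1)$ for a line or conic, $0$ for a smooth cubic, $d-1$ for a nodal cubic) kills the cubics and also the two-conic and conic-plus-two-lines configurations; moreover $R_f^{C_1}=R_f^{C_i}=(d-1)(\cD c_1+\cD c_2)$ forces the conic and the line in $\bvP^2$ to meet in two \emph{distinct} points (a tangential intersection would give only degree $d-1$) and makes the line unique. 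In the all-lines case the paper does not need any normal form for $f$: three concurrent lines in $\bvP^2$ would give a totally invariant line in $\P^2$ whose restriction has three totally invariant points, impossible for a degree-$d>1$ selfmap of $\P^1$, and four lines would produce more totally invariant lines in $\P^2$ than the bound of three recalled in Section~\ref{sec:selfmaps}. Until you supply arguments of comparable substance for your steps 2 and 3 (in particular ruling out the tangent conic-line and concurrent-lines configurations), the proposal has a genuine gap.
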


The maps appearing in Theorem~A and~B are quite rare among 
all holomorphic selfmaps, but nevertheless interesting. 
Indeed, they provide concrete examples of critically 
finite mappings. See~\cite{FS1,FS2,Uedaconic,Uedacritfin1,Uedacritfin2,critfin,Rong,Koch}
for examples and dynamics of critically finite maps,
and~\cite{Sibony} for a survey of iterations of 
rational maps on projective spaces.

There are several other notions of integrability 
for selfmaps of $\P^2$. 
In~\cite{invpencils} we classified invariant pencils of 
curves. Much more generally,
Favre and Pereira~\cite{FavPer} have classified
invariant foliations for rational maps.
The case of birational maps was studied earlier by 
Cantat and Favre~\cite{CanFav}.
Related work includes the classification of 
totally invariant curves for holomorphic endomorphisms 
of $\P^2$~\cite{FS2,CerveauLinsNeto,dajibathesis,SSU}
and for birational maps of surfaces~\cite{DJS};
see also~\cite{BonDab}.

This note is organized as follows.
After some background in Sections~\ref{sec:webs}
and~\ref{sec:invariance}
we describe in Section~\ref{sec:examples}
the mappings appearing in Theorem~A.
The proof takes place in Section~\ref{sec:proof}
and Section~\ref{sec:reducible} treats
the case of a reducible web.

\begin{ackn}
  We thank the referee for a careful reading of the paper
  and many useful suggestions.
  The second author was supported by the NSF.
  The work was partially completed during a visit to the 
  Mittag-Leffler Institute.
\end{ackn}
%
%
%
%
\section{Algebraic webs and plane geometry}\label{sec:webs}
We start by reviewing some elementary facts of plane geometry.
Let $\P^2$ denote the complex projective plane.  
The support of a divisor $D$ on $\P^2$ is denoted by $|D|$.
Let $\bvP^2$ the dual projective plane, that is, the set
of complex lines in $\P^2$. 
Then $\bvP^2$ is itself isomorphic to the projective plane. 
Let $\cD$ denote the involutive duality between lines
(resp.\ points) in $\P^2$ and points (resp.\ lines) in $\bvP^2$.
Given a point $p\in\P^2$, $\cD p\subset\bvP^2$ is
the line of lines passing through $p$.
Given a line $L\subset\bvP^2$, $\cD L=\bigcap_{l\in L}l$.

Consider a reduced, irreducible curve $B\subset\P^2$
(resp. $B\subset\bvP^2$) of degree $>1$.
The \emph{dual curve} $\vB\subset\bvP^2$ 
(resp $\vB\subset\P^2$)
is the curve of tangents to the local branches to $B$.
If $\psi:A\to B$ is a normalization map,
then $\vpsi:A\to\vB$ defined by $\vpsi(a)=\cD T_aC$,
is also a normalization map. 
Here $T_aC\subset\P^2$ (resp. $T_aC\subset\bvP^2$)
is the tangent line to the irreducible 
curve germ $\psi(A,a)$ at $\psi(a)$. 
The double dual $\check{\check{B}}$ is isomorphic to $B$.

We shall need to compare the degree and singularities of
a curve with those of its dual. To this
end, define the ramification divisor of $\psi:A\to B$ to be
$R_\psi=\sum_{a\in A}(m_\psi(a)-1)a$. Here 
$m_\psi(a)$ is the largest integer $k$ such that
$\psi^*\fm_{B,\psi(a)}\subset\fm_{A,a}^k$,
where $\fm$ denote the maximal ideals.
We then have the following Pl\"ucker-type formula:
\begin{equation}\label{e:eulchar}
  2\deg B-\deg\vB-\deg R_\psi
  =\chi(A)
  =2\deg\vB-\deg B-\deg R_\vpsi,
\end{equation}
where $\chi(A)$ is the topological Euler characteristic of $A$.
This is proved using the Riemann-Hurwitz formula as 
in~\cite[pp.277-280]{GH}; see also~\cite[p.289]{Homma}.

A \emph{web} $W$ on $\P^2$ of degree $\delta$ 
is locally defined by an unordered set of $\delta$ 
holomorphic (possibly singular) foliations. 
In particular, through a general point $p\in\P^2$
passes exactly $\delta$ leaves, and these intersect 
transversely at $p$. Globally, the leaves may exhibit 
complicated behavior and even be dense in $\P^2$.
See \eg~\cite{Per,Pir1,Web} for general facts on webs.

We shall only consider the particular
case of an \emph{algebraic web} on $\P^2$. 
By definition, this is a web $W=W_C$
given by a reduced curve 
$C\subset\bvP^2$ of degree $\delta>1$. 
The leaves of $W_C$ are exactly the lines in 
$\P^2$ corresponding to the points on $C$.
Through a generic point $p\in\P^2$ passes exactly
$\delta$ distinct lines of the web. 
See Figure~\ref{F1} for a picture of the algebraic 
web associated to a conic $C$.

Assume that $C$ is irreducible and $\delta=\deg C>1$.
The normalization map $\psi:A\to C$ then 
induces a symmetric rational map
\begin{equation*}
  \pi:A\times A\dashrightarrow\P^2
\end{equation*}
defined by $\pi(a_1,a_2)=\cD L(\psi(a_1),\psi(a_2))$,
where $L(c_1,c_2)\subset\bvP^2$ is the line passing through 
(distinct) points $c_1,c_2\in\bvP^2$. 
Let us record some facts that are easily established 
by direct computation.
The indeterminacy locus $I_\pi$ of $\pi$ is 
exactly the set of pairs
$(a,a)$ with $a\in R_\psi$ and pairs $(a,b)$ with 
$a\ne b$ but $\psi(a)=\psi(b)$. In particular,
$C$ is smooth if and only if $\pi$ is holomorphic.
We have $\pi(\Delta)=\vC$, where $\Delta\subset A\times A$ 
denotes the diagonal and $\vC\subset\P^2$ the dual curve.
More precisely, $\pi(a,a)=\vpsi(a)$.
Note that $\pi$ has topological degree $\delta(\delta-1)$.
Further, $\pi$ is biholomorphic outside
$\Delta\cup I_\pi$.
\begin{figure}[ht]
  \begin{center}
    \includegraphics[width=0.5\textwidth]{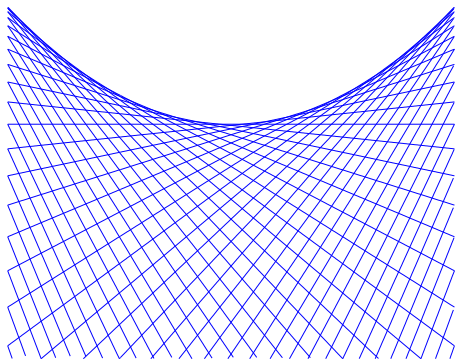}
  \end{center}
  \caption{The algebraic web associated to a conic.}\label{F1}
\end{figure}
%
%
%
%
\section{Selfmaps of curves and of the plane}\label{sec:selfmaps}
Consider a smooth algebraic curve $A$ of genus $g$ 
and a surjective holomorphic map $\phi:A\to A$
of topological degree $d>1$.
The canonical divisor class $K_A$ has degree $2g-2$.
The Riemann-Hurwitz formula asserts 
$K_A=\phi^*K_A+R_\phi$, where $R_\phi$ is the ramification
divisor. Taking degrees, we find
$0\le\deg R_\phi=(d-1)(2-2g)$.  As $d>1$, we have $g=0$ or $g=1$,
that is, $A$ is a rational or elliptic curve.

A subset $E\subset A$ is \emph{totally invariant} 
if $\phi^{-1}(E)\subset E$.  For finite subsets $E$, 
this in fact implies $\phi^{-1}(E)=E=\phi(E)$. 
When $A=\C/\Lambda$ is an
elliptic curve, $\phi$ lifts to an affine map $\tilde\phi:\C\to\C$
and one easily sees that there is no finite totally invariant set.
When $A$ is rational, it is not hard to prove that 
a totally invariant set contains at most two points, 
see \eg~\cite[Theorem~1.5, p.56]{CG}.

A holomorphic map $f:\P^2\to\P^2$ can be written in 
homogeneous coordinates as $f[x:y:z]=[P(x,y,z):Q(x,y,z):R(x,y,z)]$,
where $P$, $Q$ and $R$ are homogeneous polynomials on $\C^3$
of the same degree $d\ge1$, and $\{P=Q=R=0\}=\{0\}$.
The number $d$ is the \emph{algebraic degree} of $f$;
we shall assume $d>1$.
The topological degree of $f$ is $d^2$.
The ramification divisor $R_f$ of $f$ has degree $3(d-1)$. 
It is known~\cite{FS2,CerveauLinsNeto,dajibathesis,SSU} 
that if $C\subset\P^2$ is a (reduced, but possibly reducible)
curve such that $f^{-1}(C)\subset C$, then 
$f^{-1}(C)=C=f(C)$ and $C$ is the union of at most
three lines. Any such line $L\subset C$ occurs with
multiplicity $d-1$ in $R_f$.
%
%
%
%
\section{Invariant webs}\label{sec:invariance}
Consider the algebraic web $W_C$ associated to a curve 
$C\subset\bvP^2$ of degree $\delta$. Assume
for now that $C$ is irreducible. 

Let $f:\P^2\to\P^2$ be a holomorphic mapping of algebraic 
degree $d\ge2$. We say that the web is \emph{invariant} under $f$
if the image under $f$ of any line in the web is again a
line in the web. There is then an induced selfmap 
$g:C\to C$ defined by $g=\cD\circ f\circ\cD$.
The web is \emph{totally invariant}
if the preimage of any line in the web is a union of lines 
in the web.
\begin{prop}
  A web is invariant if and only if it is totally invariant. Moreover,
  if the web is invariant, then:
  \begin{itemize}
  \item[(i)]
    the induced map $g:C\to C$ is regular, 
    of topological degree $d$;
  \item[(ii)]
    for any $c\in C$, we have $f_*\cD c= d\cD g(c)$
    as divisors on $\P^2$.
  \end{itemize}
\end{prop}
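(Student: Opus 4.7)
The plan is to assume invariance, construct the induced map $g:C\to C$ and prove it regular of degree $d$ via the symmetric map $\pi$ from Section~\ref{sec:webs}, deduce (ii) by the projection formula, and finally obtain total invariance from a Chow-type closedness argument; the converse is a pigeonhole count at a generic point.

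\emph{Regularity and degree of $g$.} By invariance, $g(c)=\cD(f(\cD c))$ is well-defined set-theoretically on every $c\in C$ and lands in $C$. Passing to the normalization $\psi:A\to C$, the lift $\tilde g:A\to A$ is a rational map from a smooth complete curve, hence a morphism, and $g$ is regular by the universal property of normalization. For the degree I use the symmetric rational map $\pi:A\times A\dashrightarrow\P^2$ of topological degree $\delta(\delta-1)$: for generic $(a_1,a_2)$, $\pi(a_1,a_2)$ is the intersection of the two distinct web lines $\cD\psi(a_1)$ and $\cD\psi(a_2)$, which invariance sends respectively to the web lines $\cD\psi(\tilde g(a_1))$ and $\cD\psi(\tilde g(a_2))$. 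Thus
\begin{equation*}
  f\circ\pi=\pi\circ(\tilde g\times\tilde g),
\end{equation*}
and equating topological degrees gives $d^2\cdot\delta(\delta-1)=\delta(\delta-1)\cdot(\deg\tilde g)^2$, so $\deg g=d$. For (ii), $f_*(\cD c)$ is supported on the image line $\cD g(c)$, so $f_*(\cD c)=m\cdot\cD g(c)$ for some $m\ge1$; pairing with a generic hyperplane $H$ and using the projection formula yields $m=(f_*\cD c)\cdot H=(\cD c)\cdot f^*H=d$.

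\emph{Equivalence of invariance and total invariance.} Granting (i), for generic $c'\in C$ the fiber $g^{-1}(c')$ consists of $d$ distinct points $c_1,\ldots,c_d$, so $\sum_i\cD c_i$ is contained in $f^*\cD c'$; both are effective of degree $d$, hence equal. The set of divisors $\sum_i\cD c_i$ with $c_i\in C$ is the image of the proper morphism $\mathrm{Sym}^d C\to\mathrm{Div}(\P^2)$, hence closed, so the algebraic family $c'\mapsto f^*\cD c'$ takes values in it for every $c'\in C$, which is total invariance. Conversely, assume total invariance and fix a generic $q\in\P^2$. Let $L^{(1)},\ldots,L^{(\delta)}$ and $M^{(1)},\ldots,M^{(\delta)}$ be the web lines through $q$ and $f(q)$ respectively; since $q\in f^{-1}(M^{(j)})$ and the latter is a union of web lines, each contains some $L^{(i)}$. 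The $\delta$ distinct targets $M^{(j)}$ force the induced assignment $L^{(i)}\mapsto f(L^{(i)})\in\{M^{(j)}\}$ to be a bijection by pigeonhole, giving invariance on a Zariski dense subset of $W_C$, which extends to all of $W_C$ because the condition ``$f(L)$ is a line in $W_C$'' is Zariski closed on $L\in C$.

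\emph{Main obstacle.} The central step is the degree identity extracted from the commutative relation $f\circ\pi=\pi\circ(\tilde g\times\tilde g)$. The main technical nuisance is extending the generic identity $f^*\cD c'=\sum_i\cD c_i$ to every $c'\in C$, for which the properness of $\mathrm{Sym}^d C\to\mathrm{Div}(\P^2)$ is essential.
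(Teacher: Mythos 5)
Your argument for part (i) has a genuine gap at the step ``the lift $\tilde g:A\to A$ is a morphism, and $g$ is regular by the universal property of normalization.'' The universal property of normalization lets you lift maps \emph{into} $C$ from normal sources through $\psi$; it does not let you descend a morphism $A\to A$ to a morphism $C\to C$. In general, a set-theoretic selfmap of a non-normal curve whose lift to the normalization is holomorphic (and which is even continuous) need not be regular at the singular points: on the cuspidal cubic $C=\{y^2=x^3\}$ with normalization $t\mapsto(t^2,t^3)$, the selfmap induced by $t\mapsto t+1$ pulls the coordinate $x$ back to $x+2(y/x)+1$, which does not lie in the local ring at the cusp, so the descended map is not regular there although its lift is a morphism. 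At the point where the Proposition is proved, nothing excludes such singularities of $C$ (that only comes out of Theorem~A, whose proof uses this Proposition), so regularity of $g$ on $C$ itself is exactly the nontrivial content of (i), and your proposal does not establish it. This is precisely what the paper's proof is designed to handle: it writes $f_*=\rho_d\circ g$ on $C$, where $f_*:\cA_1\to\cA_d$ (pushforward on the spaces of effective divisors) is holomorphic on all of $\cA_1=\bvP^2$ and $\rho_d(L)=dL$ is a holomorphic embedding, so $g=\rho_d^{-1}\circ f_*|_C$ is regular on the possibly singular $C$; comparing $(f_*)^*\cO_{\cA_d}(1)$ with $\rho_d^*\cO_{\cA_d}(1)$ then gives the degree $d$ in the same stroke.

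Apart from that, your route is genuinely different and essentially sound. The degree identity extracted from $f\circ\pi=\pi\circ(\tilde g\times\tilde g)$ is a legitimate alternative to the paper's line-bundle comparison, but note it requires $\deg C\ge2$ (the map $\pi$ is only defined then), whereas the paper's argument applies verbatim when $C$ is a line -- a case that does occur in Theorem~A(i) and for the line components used in Section~\ref{sec:reducible}. Your (ii) is the same projection-formula observation as the paper's one-liner. Your two invariance arguments are more detailed versions of steps the paper dispatches quickly: the extension of $f^*\cD c'=\sum_i\cD c_i$ from generic to all $c'$ via closedness of the image of $\mathrm{Sym}^dC\to\cA_d$ is a correct and welcome justification of the paper's ``in particular, the preimage of every line in the web is the union of (at most) $d$ lines''; and in the converse direction the ``pigeonhole'' really rests on the remark that a line cannot map into two distinct lines (else its image would be a point), which you should state explicitly. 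The cleanest repair of the one real defect is to adopt the paper's identity $f_*=\rho_d\circ g$, which simultaneously yields regularity of $g$ and the degree, making the detour through $\pi$ unnecessary.
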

\begin{proof}
  Clearly, the web is invariant if it is totally invariant.
  Also,~(ii) is clear, since $f_*$ multiplies the 
  degree of any effective divisor by $d$.

  For $k\ge1$, let $\cA_k\simeq\P^{\frac{k(k+3)}{2}}$ 
  denote the space of effective 
  divisors of degree $k$ on $\P^2$. 
  Then $\cA_1=\bvP^2$.
  Let $\rho_d:\cA_1\to\cA_d$ be the Veronese map
  given by multiplication by $d$:
  $\rho_d(L)=dL$ and $f_*:\cA_1\to\cA_d$ the pushforward
  map induced by $f$.
  Then $f_*$ is holomorphic, of topological degree $d^2$,
  and $\rho_d$ is a holomorphic embedding.
  Suppose the web associated to $C$ is invariant.
  Then $f_*=\sigma_d\circ g$, so $g$ is holomorphic.
  Now $(f_*)^*\cO_{\cA_d}(1)=d^2\cO_{\cA_1}(1)$
  and $\rho_d^*\cO_{\cA_d}(1)\simeq d\cO_{\cA_1}(1)$.
  Restricting to $C$ implies
  $g^*\cO_C(1)=d\cO_C(1)$, so $g$ has topological degree $d$.
  In particular, the preimage of every line in the web
  is the union of (at most) $d$ lines in the web, so
  the web is totally invariant.
\end{proof}
The induced selfmap $g:C\to C$ preserves collinearity:
if $c_1,c_2,c_3\in C$ are collinear in $\bvP^2$, then so
are $g(c_1),g(c_2),g(c_3)$. Indeed, 
$\bigcap_i\cD g(c_i)=f(\bigcap_i\cD c_i)$.
Conversely, if 
$g:C\to C$ is a surjective holomorphic map preserving
collinearity, then there is a unique holomorphic mapping
$f:\P^2\to\P^2$ satisfying $g=\cD\circ f\circ\cD$.

Clearly, $g:C\to C$ lifts uniquely through the 
normalization map $\psi:A\to C$ to a 
holomorphic selfmap $\phi:A\to A$ of topological degree $d>1$. 
In particular, $A$ is rational or elliptic.
Moreover, $f:\P^2\to\P^2$ lifts 
through the rational map $\pi:A\times A\dashrightarrow\P^2$
to the selfmap 
$A\times A\overset{(\phi,\phi)}\longrightarrow A\times A$. 
This implies $f(\vC)=\vC$.

For the proof of Theorem~A we need to compare the 
ramification divisors $R_f\subset\P^2$ and
$R_\phi\subset A$ of $f$ and $\phi$, respectively.
In general, $R_f$ has degree $3(d-1)$. Since $f$ 
preserves the algebraic web associated to $C\subset\bvP^2$,
we can write $R_f=R_f^C+R_f^\sigma$, where $R_f^C$
is the part of $R_f$ supported on the lines of the web,
and $R_f^\sigma$ is the ``sectional'' part of $R_f$.
\begin{lem}\label{l:fibercritset}
  If $a\in A$ and $\psi(a)\notin\sing C$,
  then the multiplicity of 
  the point $a$ in $R_\phi$ equals the multiplicity of 
  the line $\cD\psi(a)$ in $R^C_f$.
\end{lem}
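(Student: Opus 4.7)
The plan is to compute both quantities from a local normal form for $f$ near a generic point $x_0$ of the line $L=\cD\psi(a)$. Since $\psi(a)$ is a smooth point of $C$, the map $\psi$ is a local biholomorphism at $a$, so a neighborhood $\Delta\subset A$ of $a$ parametrizes a smooth arc of web leaves $L_s=\cD\psi(s)$ with $L_0=L$. For generic $x_0\in L$, these leaves form a smooth local foliation of a neighborhood $U\subset\P^2$ of $x_0$, and I choose holomorphic coordinates $(s,y)$ on $U$ in which $L_s=\{s=\text{const}\}$. Analogously, near $f(x_0)\in L'=\cD\psi(\phi(a))$, I build coordinates $(s',y')$ adapted to the family of leaves through $\phi(a)$.

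Web-invariance forces $f(L_s)\subset L_{\phi(s)}$, so in these coordinates $f(s,y)=(\phi(s),h(s,y))$ for some holomorphic $h$ (with $\phi$ a local representative of the map from the lemma). The Jacobian is then
\[
\det Df \;=\; \phi'(s)\,\del_y h(s,y).
\]
The restriction $f|_L\colon L\to L'$ is nonconstant, of degree $d$ by the proposition of Section~\ref{sec:invariance}, so $\del_y h(0,y)\not\equiv 0$, and at generic $x_0$ one may assume $\del_y h(x_0)\neq 0$. Hence the order of vanishing of $\det Df$ along $L$ at $x_0$ equals $\ord_{s=0}\phi'(s)=m_\phi(a)-1$. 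This is simultaneously the multiplicity of $L$ in $R_f$ (and thus in $R_f^C$, since $L$ is a web leaf) and the multiplicity of $a$ in $R_\phi$.

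The main technical hurdle is justifying the product normal form $f(s,y)=(\phi(s),h(s,y))$. Smoothness of $\psi(a)$ is essential for having a smooth one-parameter family of leaves $\{L_s\}_{s\in\Delta}$ through $L$ (a singular point of $C$ with several local branches would produce several such families), and genericity of $x_0\in L$ ensures both that this family foliates a neighborhood of $x_0$ and that $\del_y h(x_0)\neq 0$. Granted the normal form, the lemma reduces to the Jacobian computation above.
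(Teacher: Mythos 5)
Your argument is correct and is essentially the paper's own: the paper's proof uses the map $\pi:A\times A\dashrightarrow\P^2$, which at a point $(a,b)$ with $b$ generic is precisely a leaf-adapted chart near a generic point of $\cD\psi(a)$ in which $f$ becomes $(\phi,\phi)$, so the multiplicity of $\cD\psi(a)$ in $R_f^C$ is read off from the factor $\phi'$ --- the same Jacobian computation you perform in your coordinates $(s,y)$, $(s',y')$. The only cosmetic difference is that the paper takes the second coordinate to be another leaf parameter, so the map is a genuine product $(\phi,\phi)$ rather than your triangular form $(\phi(s),h(s,y))$ with $\del_y h\neq0$ at a generic point.
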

\begin{proof}[Proof of Lemma~\ref{l:fibercritset}]
  If $a,b\in A$,
  $\psi(a),\psi(b)\not\in\sing C$ and $\psi(a)\ne\psi(b)$,
  then $\pi:A\times A\dashrightarrow\P^2$ is a local
  biholomorphism at $(a,b)$. 
  Picking $b$ generic gives the result.
\end{proof}
%
%
%
%
\section{Examples}\label{sec:examples}
We now go through the examples appearing in 
Theorem~A and briefly discuss their dynamics.
%
%
\subsection{A line}\label{sec:line}
The case when $C$ is a line corresponds to a pencil of 
lines through a point $p\in\P^2$. In other words, $C=\cD p$.
We have $\deg R_f^C=2(d-1)$ and $\deg R_f^\sigma=(d-1)$.

Pick homogeneous coordinates $[x:y:z]$ on $\P^2$
such that $p=[0:0:1]$. Then $f$ preserves $C$ if and only if
it takes the form $f[x:y:z]=[P(x,y):Q(x,y):R(x,y,z)]$.

Holomorphic selfmaps of $\bvP^2$ preserving a pencil of
curves were classified in~\cite{invpencils}. 
Their dynamics is studied in~\cite{polyskew,fibered}.
%
%
\subsection{A conic}\label{sec:conic}
Suppose $C\subset\bvP^2$ is a smooth conic so that 
$A\simeq C\simeq\P^1$. Pick any holomorphic selfmap
$\phi:\P^1\to\P^1$ of degree $d>1$. The map 
$\pi:\P^1\times\P^1\to\P^2$ is then holomorphic,
of topological degree 2, and 
$\P^1\times \P^1\overset{(\phi,\phi)}\longrightarrow \P^1\times \P^1$
induces a holomorphic selfmap $f:\P^2\to\P^2$
of algebraic degree $d$.
Any holomorphic selfmap $f$ of $\P^2$ preserving
a conic $C\subset\bvP^2$ is of this form.
We have $\deg R_f^C=2(d-1)$ and $\deg R_f^\sigma=(d-1)$.
The dual curve $\vC\subset\P^2$ is an invariant 
smooth conic and $f^*\vC=\vC+2|R^\sigma_f|$,
see Lemma~\ref{l:sectcritset}.
Selfmaps as above were first introduced in a dynamic
setting by Ueda~\cite{Uedaconic}. They can be used to
provide simple examples of critically finite maps, in particular 
maps whose Julia set is all of $\P^2$~\cite[Proposition 4.1]{Uedaconic}.
%
%
\subsection{A smooth cubic}\label{sec:smoothcubic}
Let $C\subset\bvP^2$ be a smooth cubic. 
Then $C\simeq\C/\Lambda$ is an elliptic curve and 
admits a group law:
three points in $C$ are collinear if and only if
their sum is zero.
Choose the origin of the group law $(C,+,0)$ at 
any flex of $C$ and consider any 
holomorphic selfmap $g:C\to C$.
Then $g$ preserves collinearity if and only if
its translation factor is a flex of $C$.
In this case, $g$ induces a selfmap
$f:\P^2\to\P^2$ preserving the web associated to $C$,
and any such $f$ is of this form.

We have $R_f^C=\emptyset$ and $\deg R_f^\sigma=3(d-1)$.
The dual curve $\vC\subset\P^2$ is an invariant 
sextic with nine cusps and
$f^*\vC=\vC+2|R^\sigma_f|$, 
see Lemma~\ref{l:sectcritset}.

In particular, $f$ is always critically finite,
$f(|R_f|)=\vC=f(\vC)$.
It follows from~\cite[Theorem~5.9]{Uedacritfin1} that
the Julia set of $f$ is all of $\P^2$.
%
%
\subsection{A nodal cubic}\label{sec:nodalcubic}
Let $C\subset\bvP^2$ be a nodal cubic, 
with the node at $c_*$. 
There is a geometrically defined multiplicative group 
law on $C^*\=C\setminus\{c^*\}$, given as follows:
$c_1c_2c_3=e$ in the group 
if and only if $c_1$, $c_2$ and $c_3$ are
collinear in $\bvP^2$. Here the
unit element $e$ can be chosen as any of the
three flexes of $C$.
Concretely, the unique normalization map
$\P^1\simeq A\overset{\psi}\rightarrow C$ such that 
$\psi(0)=\psi(\infty)=c_*$ and $\psi(1)=e$
restricts to a group homomorphism $\psi:\C^*\to C^*$.
We then see that $g:C\to C$ preserves collinearity if and only if
$\phi:A\to A$ takes the form $\phi(a)=\tau a^{\pm d}$,
where $\tau^3=1$. In fact, by changing the flex 
representing $e$, we obtain $\tau=1$.

We have $\deg R_f^C=(d-1)$ and $\deg R_f^\sigma=2(d-1)$.
The dual curve $\vC\subset\P^2$ is an invariant 
quartic with three cusps and one bitangent.
We have $f^*\vC=\vC+2|R^\sigma_f|$, 
see Lemma~\ref{l:sectcritset}.
In particular, $f$ is critically finite.

In suitable coordinates on $\bvP^2$ and $\P^2$, we have
$C=\{u^3+v^3=uvw\}$, 
$\psi(a)=[-a^2:a:a^3-1]$,
$\vpsi(a)=[2a^3+1:2a+a^4:a^2]$
and $\pi(a,b)=[ab(a+b)+1:a+b+a^2b^2:ab]$.
The selfmap of $\P^2$ 
associated to $\phi(a)=a^d$ is then a polynomial
mapping $f_d:\C^2\to\C^2$ of the form 
$f_d(x,y)=(A_d(x,y,1),A_d(y,x,1))$, where
$A_d(x+y+z,xy+yz+zx,xyz)=x^d+y^d+z^d$.
For example, $f_2(x,y)=(x^2-2y,y^2-2x)$.
%
%
%
%
\section{Proof of Theorem~A}\label{sec:proof}
Without loss of generality, 
assume that $\delta\=\deg C\ge3$ and that $C$ is
not a smooth cubic. We shall prove, in a self-contained way,
that $C$ is a nodal cubic. An alternative approach,
suggested by the referee, is to use a result in web theory
to first show that $\deg C\le 3$: see Remark~\ref{rmk:referee}.

We need two results, the proofs of which are given below.
Let $m_c(C)$ denote the multiplicity of the curve $C$ at
a point $c$.
\begin{lem}\label{l:singtotinv}
  The singular locus $\sing C$ is totally invariant for $g:C\to C$.
  Moreover, if $c\in\sing C$, then $m_c(C)=\delta-1$.
  As a consequence, the set $A_s\=\psi^{-1}(\sing C)$ is
  totally invariant for $\phi:A\to A$.
\end{lem}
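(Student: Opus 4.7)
The plan is to derive a multiplicity-averaging identity for the pullback $g^*$ on divisors of $C$ and combine it with the dynamics on the normalization $A$ reviewed in Section~\ref{sec:selfmaps}.

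First, I would establish the identity
\begin{equation*}
\sum_{c'\in g^{-1}(c)} e(c')\, m_{c'}(C) = d\cdot m_c(C), \qquad \sum_{c'\in g^{-1}(c)} e(c') = d,
\end{equation*}
where $e(c')$ denotes the multiplicity of $c'$ in $g^*c$. Starting from $\psi\circ\phi = g\circ\psi$, one has $\psi^*g^*c = \phi^*\psi^*c$; expanding via $\psi^*c = \sum_{a\in\psi^{-1}(c)} m_\psi(a)\, a$ and comparing coefficients at each $b\in A$, writing $c':=\psi(b)$, gives the branch-level identity
\begin{equation*}
e(c')\, m_\psi(b) = m_\phi(b)\, m_\psi(\phi(b)).
\end{equation*}
Summing over $b\in\psi^{-1}(c')$ using $m_{c'}(C) = \sum_{a\in\psi^{-1}(c')} m_\psi(a)$, then over $c'\in g^{-1}(c)$ using $\deg\phi = d$, yields the averaging identity.

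Now let $M:=\max_{c\in C} m_c(C)$ and $S_M:=\{c\in C: m_c(C)=M\}$. If $M=1$ then $C$ is smooth and the lemma is vacuous, so assume $M\ge 2$, making $S_M$ finite. Writing the averaging identity at $c\in S_M$ as $\sum e(c')(M-m_{c'}(C))=0$ with nonnegative summands forces $g^{-1}(c)\subset S_M$; by surjectivity of $g$, one gets $g^{-1}(S_M)=S_M$. Hence $\psi^{-1}(S_M)\subset A$ is a finite, nonempty, totally $\phi$-invariant set. By Section~\ref{sec:selfmaps}, this excludes $A$ being elliptic, so $A=\P^1$ and $|\psi^{-1}(S_M)|\le 2$.

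It remains to promote $S_M$-invariance to total invariance of all of $\sing C$ and to identify $M = \delta - 1$; this is the main obstacle. The averaging identity alone does not rule out a hypothetical $c_*\in\sing C\setminus S_M$ with $2\le m_{c_*}(C) < M$: at such a point smooth preimages are a priori permitted. To exclude them I would combine the branch-level identity with the tight control on $\phi:\P^1\to\P^1$ forced by $|\psi^{-1}(S_M)|\le 2$ (essentially a power map in the extremal case) and propagate along the backward $g$-orbit of $c_*$ to derive a contradiction. For the equality $M = \delta - 1$, I would combine the Pl\"ucker-type formula~(\ref{e:eulchar}), the Riemann-Hurwitz identity $\deg R_\phi = 2(d-1)$, and Lemma~\ref{l:fibercritset} to balance $\deg R_f = 3(d-1)$ against $\deg R_\psi$ and the singular-branch contributions, forcing $M$ to saturate the Bezout bound $\delta - 1$ for an irreducible plane curve. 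The consequence for $A_s$ is then immediate since $A_s = \psi^{-1}(\sing C) = \psi^{-1}(S_M)$.
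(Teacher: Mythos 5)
There is a genuine gap, and it is structural rather than merely technical. Your argument is entirely intrinsic to the semiconjugacy $\psi\circ\phi=g\circ\psi$ on the abstract curve, but at that level of generality the lemma is simply false, so no amount of refinement of the averaging identity can close the proof. Concretely, let $C$ be a nodal cubic with normalization $\psi:\P^1\to C$, $\psi^{-1}(\sing C)=\{0,\infty\}$, and take $\phi(z)=(z^2+z+1)/(z^2-z+1)$, so that $\phi(0)=\phi(\infty)=1$. Then $\phi$ descends to a holomorphic, surjective, degree-two selfmap $g$ of $C$ for which the node pulls back to four \emph{smooth} points: $\sing C$ is not totally invariant. (The same example shows your bookkeeping is unreliable at singular points: $g$ is not flat over the node, the fiber there has four points of local degree one, so $\sum_{c'\in g^{-1}(c)}e(c')=4\ne d=2$; and the branch-level identity cannot hold with a single branch-independent $e(c')$ when the branches at $c'$ are ramified differently, since a point of $\sing C$ is not a Cartier divisor and $\psi^*$ of a Weil divisor cannot be expanded as you do.) The moral is that the lemma is a statement about the \emph{web}, not about $g$ alone: one must use that $g$ is induced by $f:\P^2\to\P^2$ preserving $W_C$, which is precisely the input your proof never exploits beyond degree counts. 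The paper's proof does exactly this: through a generic point of the line $\cD c$ pass $\delta+1-m_c(C)$ lines of the web; if $\cD c\not\subset|R_f|$ this count is preserved by $f$, while if $\cD c\subset|R_f|$ the kernel of $Df_p$ is one-dimensional, forcing $\delta+1-m_c(C)\le 2$, i.e.\ $m_c(C)=\delta-1$. This single dichotomy yields both total invariance of $\sing C$ and the multiplicity statement simultaneously.

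Even setting aside the validity of the identities, your argument only reaches the backward invariance of the top-multiplicity stratum $S_M$, and you explicitly leave the two main assertions---total invariance of all of $\sing C$ and the equality $m_c(C)=\delta-1$---as heuristics ("propagate along the backward orbit", "balance degrees"). The final step also silently assumes $\sing C=S_M$ when writing $A_s=\psi^{-1}(S_M)$. Moreover, the proposed degree balancing cannot be carried out as described: Lemma~\ref{l:fibercritset} controls the multiplicity of $\cD\psi(a)$ in $R_f^C$ only for $\psi(a)\notin\sing C$, so the contribution to $R_f$ of the lines dual to singular points is exactly the unknown quantity; in the paper that contribution is pinned down only \emph{after} total invariance is established (a totally invariant line occurs with multiplicity $d-1$ in $R_f$), and the value $\delta-1$ comes from the rank-one-differential argument, not from Pl\"ucker or Riemann--Hurwitz.
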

For the second result, recall the notation $R_f^C$ and $R_f^\sigma$ 
for the fiber and sectional parts of the ramification locus of
$f$, respectively.
\begin{lem}\label{l:sectcritset}
  We have $\deg\vC\le\frac{2}{d-1}\deg R_f^\sigma$ with 
  equality if and only if $f^*\vC=\vC+2|R_f^\sigma|$ as divisors.
\end{lem}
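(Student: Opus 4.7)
My plan is to analyze the divisor $f^*\vC$ and show that, apart from $\vC$ itself, every irreducible component occurs with multiplicity at least $2$. Since $f(\vC) = \vC$ (noted in Section~\ref{sec:invariance}) and $\vC$ is not a line (irreducibility of $C$ with $\deg C>1$ forces $\deg\vC\ge 2$), write
\[
f^*\vC \;=\; m\,\vC + \sum_i k_i\,D_i,
\]
with $m\ge 1$ and the $D_i$ distinct irreducible curves different from $\vC$. Taking degrees, $\sum_i k_i\deg D_i = (d-m)\deg\vC$. Each $D_i$ maps surjectively onto $\vC$ (since $f$ contracts no curves), so if $D_i$ were a web line, then $f(D_i)=\vC$ would be a line, which it is not. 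Thus neither $\vC$ nor any $D_i$ belongs to the web, and each contributes its full multiplicity to the sectional ramification:
\[
\deg R_f^\sigma \;\ge\; (m-1)\deg\vC + \sum_i (k_i-1)\deg D_i.
\]

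The key step is to prove $k_i\ge 2$ for every $i$. Pick a generic $p\in D_i$ and set $q=f(p)\in\vC$. Through $p$ there are exactly $\delta=\deg C$ distinct web lines, since $\cD p$ meets $C$ transversally in $\delta$ simple points ($p\notin\vC$ generically). Through the generic point $q\in\vC$, however, there are only $\delta-1$ distinct web lines: $\cD q$ is tangent to $C$ at the point $c_0=\psi(\vpsi^{-1}(q))$, meeting $C$ with multiplicity $2$ at $c_0$ and simply at $\delta-2$ further points. Since $f$ sends every web line through $p$ to a web line through $q$, the pigeonhole principle yields two distinct web lines $L_1,L_2$ through $p$ both mapping into the same web line $M$ through $q$. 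The lines $L_1, L_2$ have distinct tangent directions at $p$, while $f(L_j)\subset M$ forces $Df_p(T_pL_j)\subset T_qM$ for both $j$. Since $T_qM$ is one-dimensional, $Df_p$ has rank at most $1$, so $p\in R_f$. Consequently $D_i\subset R_f$; since the multiplicity of $D_i$ in $R_f$ equals $k_i-1$ (standard local Jacobian computation), we conclude $k_i\ge 2$.

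Armed with $k_i\ge 2$, we have $k_i-1\ge k_i/2$, and the earlier estimate gives
\[
\deg R_f^\sigma \;\ge\; (m-1)\deg\vC + \tfrac{1}{2}\sum_i k_i \deg D_i
= \tfrac{m+d-2}{2}\deg\vC \;\ge\; \tfrac{d-1}{2}\deg\vC,
\]
using $m\ge 1$ in the last step. This is the desired inequality $\deg\vC\le \tfrac{2}{d-1}\deg R_f^\sigma$. Equality forces $m=1$, every $k_i=2$, and $R_f^\sigma=\sum_i D_i$ (no extra components), hence $f^*\vC=\vC+2\sum_i D_i=\vC+2|R_f^\sigma|$. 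The decisive obstacle is the pigeonhole/tangent step showing $k_i\ge 2$: it exploits the fact that a generic point of the envelope $\vC$ lies on strictly fewer distinct web lines than a generic point of $\P^2$, a consequence of the tangency of $\cD q$ to $C$ at $c_0$.
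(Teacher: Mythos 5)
Your proof is correct and follows essentially the same route as the paper: the same decomposition $f^*\vC=m\vC+\sum k_iD_i$ into $\vC$ plus components of $|R_f^\sigma|$, the same degree count, and the same convexity step $k_i-1\ge k_i/2$ leading to $\tfrac12(d-1)\deg\vC$, with the identical equality analysis. The only difference is that you spell out (via the pigeonhole comparison of the $\delta$ web lines through a generic point of $D_i$ with the $\delta-1$ through its image on $\vC$, plus the local normal form giving multiplicity $k_i-1$ in $R_f$) the facts the paper records more briefly, namely that $f^{-1}\vC\subset\vC\cup|R_f^\sigma|$ and that the extra components occur with multiplicity at least $2$.
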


Now let us prove Theorem~A.
The map $\phi:A\to A$ has topological 
degree $d>1$, so $A$ has to be a rational or elliptic curve.
Since we have assumed $\delta\ge 3$, $C$ is singular.
Lemma~\ref{l:singtotinv} implies that
$A_s=\psi^{-1}(\sing C)$ is a totally invariant set
for $\phi:A\to A$. This is impossible if $A$ is elliptic,
so $C$ must be rational. Moreover, $A_s$
consists of one or two points. Each such point 
corresponds to a line in $\P^2$ that is totally invariant for
$f$, and hence contributes to $R_f^C$ as a divisor of degree
$d-1$.

\textbf{Case 1}: $\#A_s=1$. The point 
in $A_s$ contributes a line of multiplicity $d-1$
to $R_f^C$. By Lemma~\ref{l:fibercritset},
the critical points of $\phi$ in $A\setminus A_s$
contribute lines of total degree $d-1$ in $R_f^C$.
Thus $\deg R_f^\sigma=d-1$.
Lemma~\ref{l:sectcritset} gives $\vdelta\le2$, so
that $\vC$, and hence $C$, is a conic.
This contradicts the assumption $\delta\ge 3$.

\textbf{Case 2}: $\#A_s=2$.
Then $\phi:A\to A$ has no critical points outside $A_s$,
so $\deg R_f^\sigma=d-1$ or $2(d-1)$, depending on
whether $\#\sing C=2$ or $\#\sing C=1$. 
If $\deg R_f^\sigma=d-1$, Lemma~\ref{l:sectcritset}
implies $\vdelta\le2$, so that $\vC$, and hence $C$
is a smooth conic, contradicting $\delta\ge3$.
Hence suppose $\#\sing C=1$ and $\deg R_f^\sigma=2(d-1)$.
Write $A_s=\{a,b\}=|R_\psi|$. Then $\psi(a)=\psi(b)=\sing C$.
By Lemma~\ref{l:singtotinv},
$\deg R_\psi=(m_\psi(a)-1)+(m_\psi(b)-1)=\delta-3$.
It follows from~\eqref{e:eulchar} applied to $B=C$
that $\delta=\vdelta-1$. 
Now Lemma~\ref{l:sectcritset} shows that $\vdelta\le 4$.
We have assumed $\delta\ge3$, hence $\delta=3$.
Since $\#\sing C=1$ and $\# A_s=2$, $C$ is a nodal cubic.

\begin{proof}[Proof of Lemma~\ref{l:singtotinv}]
  We may assume that $\delta\ge 3$ or else the irreducible
  curve $C$ is smooth, and there is nothing to prove.
  
  Note that for any $c\in C$, a generic line
  $l\subset\bvP^2$ through $c$ intersects $C$ 
  in exactly $\delta+1-m_c(C)$ points.
  Dually, through a generic point on $\cD c\subset\P^2$
  passes exactly $\delta+1-m_c(C)$ distinct lines of
  the web. In particular, $c\in\reg C$ if and only if
  a generic point on the line $\cD c\subset\P^2$ 
  belongs to $\delta$ distinct lines of the web.

  Now consider $c\in C$ and $c'\=g(c)\in C$.
  Let $p\in\cD c\subset\P^2$ be a generic point and
  write $p'\=f(p)$.
  Assume that $c'\in\sing C$.
  We will show that $c\in\sing C$.
  This will prove that $\sing C$ is totally invariant.

  First assume $\cD c\not\subset|R_f|$. 
  Then $f$ is a local biholomorphism at $p$,
  so $p$ and $p'$ belong to the same number of 
  lines of the web.
  Thus $c\in\sing C$.

  Now assume $\cD c\subset|R_f|$. 
  Then the kernel of the differential 
  $Df_p$ is one-dimensional.
  Thus there is at most one line $L$
  in $\P^2$ through $p$ such that the curve germ
  $f(L,p)$ is transverse to the line $\cD c'$ at $p'$.
  This implies that $\delta+1-m_c(C)\le 2$,
  that is, $m_c(C)\ge\delta-1$. 
  The reverse inequality always holds
  (since $C$ is irreducible and not a line), 
  and so $m_c(C)=\delta-1$. 
  In particular $c\in\sing C$.
 
  Thus $\sing C$ is totally invariant.
  This implies that $\cD c\subset|R_f|$ for every 
  $c\in \sing C$. The above argument
  then shows $m_c(C)=\delta-1$ for every $c\in\sing C$.
\end{proof}
\begin{proof}[Proof of Lemma~\ref{l:sectcritset}]
  The dual curve $\vC$ is the set of points in $\P^2$ 
  belonging to $<\delta$ lines of the web.
  It is therefore clear that 
  $f^{-1}\vC\subset\vC\cup|R_f|$.
  In fact, we have
  $f^{-1}\vC\subset\vC\cup|R_f^\sigma|$ since no line in the web is
  mapped into the irreducible curve $\vC$.

  Write 
  \begin{equation*}
    f^*\vC=a\vC+\sum_jm_j X_j,
  \end{equation*}
  where $a\ge 0$, $m_j>1$ and $X_j$ are 
  irreducible components of $|R_f^\sigma|$.
  Write $\lambda_j=\deg X_j$.
  Then $(d-a)\vdelta=\sum m_j\lambda_j$ and 
  \begin{multline*}
    \deg R_f^\sigma
    \ge\max\{a-1,0\}\vdelta+\sum(m_j-1)\lambda_j\\
    \ge\max\{a-1,0\}\vdelta+\frac12\sum m_j\lambda_j\\
    =\frac12(2\max\{a-1,0\}\vdelta+(d-a)\vdelta)
    \ge\frac12(d-1)\vdelta.
  \end{multline*}  
  Equality holds if and only if $a=1$, $m_j=2$ for all $j$
  and $|R_f^\sigma|=\sum X_j$. 
\end{proof}
%
%
%
%
\section{The reducible case}\label{sec:reducible}
We now prove Theorem~B.
Let $C_j$ be the irreducible components of $C$
and write $\delta_j=\deg C_j$.
Replacing $f$ by an iterate, we may assume
that the web associated to each $C_j$ is (totally)
invariant for $f$.
Write $R_f=R_f^{C_j}+R_f^{\sigma_j}$ for any $j$.

First assume $\delta_j>1$ for some $j$, say $j=1$.
From the analysis above we have $f(|R_f^{\sigma_1}|)=\vC_1$.
For $i>1$, this implies $\delta_i=1$ and 
$R_f^{\sigma_1}=R_f^{\sigma_i}$, hence also
$R_f^{C_1}=R_f^{C_i}$.
Taking degrees and consulting Section~\ref{sec:examples}
we see that $C_1$ must be a conic. It has to intersect each
line $C_i$, $i>1$,  in two distinct points $c_1$, $c_2$.
Then $R_f^{C_1}=(d-1)(\cD c_1+\cD c_2)$.
In particular, the line $C_i$ is unique.
Thus $C=C_1\cup C_2$, where $C_1$ is a conic, 
$C_2$ is a line and $C_1\cap C_2=\{c_1,c_2\}$.
As $f$ preserves the web $W_{C_1}$,
it comes from a selfmap of $C_1\simeq\P^1$ 
for which $c_1$ and $c_2$
are totally invariant. Conversely, if $g$ is such a 
selfmap, the associated map $f:\P^2\to\P^2$ leaves the 
lines $\cD c_1$ and $\cD c_2$ totally invariant.
One can then check that $f$ preserves the linear
pencil of lines through $\cD c_1\cap \cD c_2$, that is,
the web $W_{C_1}$.

Now suppose all the irreducible components of $C$
are lines. We cannot have three concurrent lines, 
as then $f$ would admit a totally invariant line $l$
such that the restriction of $f$ to $l$ would
have three totally invariant points.
We also cannot have four lines, as $f$ then would admit 
five totally invariant lines.

If $C$ is a union of two lines, $f$ can be written in 
suitable coordinates as a polynomial product map
$f(x,y)=(p(x),q(y))$, where $\deg p=\deg q=d$.
If $C$ is a union of three lines, then
$f(x,y)=(x^d,y^d)$.

\begin{rmk}\label{rmk:referee}
  As the referee points out, one can use known results 
  in web geometry to
  directly show that $\deg C\le 3$ in Theorems~A and~B. 
  Indeed, it is known that if
  $(W,0)$ and $(W',0)$ are germs of linear webs of
  degree $\ge 4$ on $\C^2$
  (\ie the leaves are lines and through a general point passes at least
  four leaves) 
  and $\Phi:(\C^2,0)\to(\C^2,0)$ is a local biholomorphism
  mapping leaves to leaves, then $\Phi$ extends to an automorphism
  of $\P^2\supset\C^2$. 
  If $\deg C\ge4$, one can get a contradiction by taking $\Phi$ as
  the germ of $f:\P^2\to\P^2$ at a generic point.
  The result above goes back to the Hamburg school of web geometry:
  see~\cite[Section 42]{BlaBol} or~\cite[Corollaire~2, p.535]{Henaut}
  for a precise modern reference.
  See also~\cite{Pir2} for related results.
\end{rmk}

%
%
%
%

\end{document}